\def\@endtheorem{\endtrivlist}
\begin{document}
\newcommand{\Pnd}{{\cal P}_{n}^{d}}
\def\l{\lambda}
\def\m{\mu}
\def\a{\alpha}
\def\b{\beta}
\def\g{\gamma}
\def\G{\Gamma}
\def\d{\delta}
\def\e{\epsilon}
\def\o{\omega}
\def\O{\Omega}
\def\v{\varphi}
\def\t{\theta}
\def\r{\rho}
\def\bs{$\blacksquare$}
\def\bp{\begin{proposition}}
\def\ep{\end{proposition}}
\def\bt{\begin{theo}}
\def\et{\end{theo}}
\def\be{\begin{equation}}
\def\ee{\end{equation}}
\def\bl{\begin{lemma}}
\def\el{\end{lemma}}
\def\bc{\begin{corollary}}
\def\ec{\end{corollary}}
\def\pr{\noindent{\bf Proof: }}
\def\note{\noindent{\bf Note. }}
\def\bd{\begin{definition}}
\def\ed{\end{definition}}
\def\C{{\mathbb C}}
\def\P{{\mathbb P}}
\def\Z{{\mathbb Z}}
\def\R{{\mathbb R}}
\def\d{{\rm d}}
\def\deg{{\rm deg\,}}
\def\deg{{\rm deg\,}}
\def\arg{{\rm arg\,}}
\def\min{{\rm min\,}}
\def\max{{\rm max\,}}
\def\d{{\rm d}}
\def\deg{{\rm deg\,}}
\def\deg{{\rm deg\,}}
\def\arg{{\rm arg\,}}
\def\min{{\rm min\,}}
\def\max{{\rm max\,}}
\newcommand{\tab}{\hspace*{2em}}
\newtheorem{theo}{Theorem}[section]
\newtheorem{lemma}{Lemma}[section]
\newtheorem{definition}{Definition}[section]
\newtheorem{corollary}{Corollary}[section]
\newtheorem{proposition}{Proposition}[section]
\newtheorem{problem}{Problem}[section]

\begin{titlepage}
	\begin{center}

		\topskip 5mm
								
		{\LARGE{\bf {A case of multivariate
												
			\smallskip
												
		Birkhoff interpolation using high order derivatives}}}
								
		\vskip 6mm
								
		{\large {\bf Gil Goldman$^{*}$}}
								
		\vspace{3 mm}
	\end{center}
				
	{$^{*}$ Department of Mathematics, The Weizmann Institute of
		Science, Rehovot 76100, Israel. \ e-mail: gilgoldm@gmail.com}
				
	\vspace{3 mm}
	\begin{center}
								
		{ \bf Abstract}
	\end{center}
	{\small{We consider a specific scheme of multivariate Birkhoff polynomial interpolation. 
	Our samples are derivatives of various orders $k_j$ at fixed points $v_j$ along fixed straight lines through $v_j$ 
	in directions $u_j$, under the following assumption: the total number of sampled derivatives of order 
	$k, \ k=0,1,\ldots$ 
	is equal to the dimension of the space homogeneous polynomials of degree $k$. We show
	that this scheme is regular for general directions. 
	Specifically this scheme is regular independent of the position of the interpolation nodes.
	In the planar case, we show that this scheme is regular for distinct directions.
								
	Next we prove a ``Birkhoff-Remez'' inequality for our sampling scheme extended to larger sampling sets. 
	It bounds the norm of the interpolation polynomial through the norm of the samples, in terms of the geometry 
	of the sampling set.
	}}
				
	            \medskip
            
            \noindent {\bf Keywords} Norming set $\cdot$
            Norming constant $\cdot$ Multivariate Hermite interpolation $\cdot$ Remez-type inequality $\cdot$ Multivariate Birkhoff interpolation $\cdot$ Poised
\end{titlepage}
\newpage


\section{Introduction}\label{Sec:Intro}
	\setcounter{equation}{0}
	 
	In this paper we study a specific scheme of Birkhoff polynomial interpolation. As in many other cases 
	(compare \cite{alexander1997interpolation,ciliberto1998interpolation,eastwood1991interpolation,gasca1982lagrange}) 
	our samples are derivatives of various orders $k_j$ at fixed points $v_j$ along fixed straight lines through $v_j$ 
	in directions $u_j$. 
	It is convenient to assume that for each $j$ exactly one derivative is sampled, but the points and the directions 
	are allowed to coincide. 
	However, we make the following additional assumption: as we consider sampling of polynomials of degree $d$, 
	the total number of sampled derivatives of order $k, \ k=0,1,\ldots,d,$ 
	is equal to the dimension of the space of homogeneous polynomials of degree $k$.
	 
	We show that the necessary and sufficient conditions on $v_j,u_j$ for this Birkhoff interpolation problem to 
	be well-posed 
	take a rather simple form, they depend only on the sampling directions $u_j$, 
	{\it but are independent of the sampling points $v_j$}.
	 
	Our second result is a ``Birkhoff-Remez'' (see \cite{remez1936propriete}) inequality for the sampling scheme considered 
	(extended to larger sampling sets). 
	It bounds the norm of the interpolation polynomial on a compact set by the norm of the samples, in terms of the geometry 
	of the sampling set.
	 
	\smallskip
	 
	Our results are motivated by the following two basic questions:
	 
	\smallskip
	 
	\noindent 1. For a prescribed type of Hermite or Birkhoff interpolation problem, find 
	the conditions by which there exists a unique solution.
	This problem is central in Approximation Theory (see \cite{lorentz2000multivariate,sauer1995multivariate,bos2008multipoint} and references therein). 
	It was traditionally considered, from a somewhat different point of view, also in Algebraic geometry 
	(see \cite{miranda1999linear,alexander1997interpolation,ciliberto2001geometric,ciliberto1998interpolation,froberg2012waring} and references therein). 
	In particular, one of central open problems in this direction is the general dimensionality problem.
	Consider the following multivariate Hermite interpolation problem.
	\begin{problem}[Hermite Interpolation]\label{prob.hermite}
		Let $\{v_1,..,v_r\} \subset \R^n$ be a set of points and let $\{m_1,..,m_r\}$ be a 
	    set of positive integers such that $\sum_{k=1}^{r}\linebreak[0]\binom{m_k-1+n}{n}= \dim({\cal P}_n^d)$.
	    For each $k=1,..,r$, let $\{\psi_{\alpha,k}\}_{|\alpha| < m_k}$ be a given set of real values. 
	    Find $P \in {\cal P}_{n}^d$ that satisfies  
	    \begin{equation}\label{eq.hemite}
	            D^{\alpha}P(v_k) = \psi_{\alpha,k}, \tab |\alpha| < m_k, \tab k=1,..,r.
	    \end{equation}
	\end{problem}
	
 	 Above and forth we use the standard multi-index notation.
	 For $\alpha=(\alpha_1,..,\alpha_n),\allowbreak \alpha \in \{\mathbb{N} \cup 0\}^n$, we define:
     Absolute value, $|\alpha| =\alpha_1+..+\alpha_n$;
     Power, for $u \in \mathbb{R}^n$, $u^{\alpha} = u_1^{\alpha_1}\cdot .. \cdot u_n^{\alpha_n}$;
     Partial derivative, for $x=(x_1,\ldots,x_n) \in \mathbb{R}^n$, 
     $D^{\alpha}=\frac{\partial^{|\alpha|}}{\partial x^\alpha} = 
     \frac{\partial^{|\alpha|}}{\partial x_1^{\alpha_1}..\partial x_n^{\alpha_n}}$. 
     We also define $D^{0}P(v)=P(v)$.
     	
	 For $P \in \Pnd$ given in monomial basis, $P = \sum_{|\alpha|\le d}\a_{\alpha} x^{\alpha}$, let $A=A(v_1,..,v_r)$ be
	 the left hand side matrix form of the linear system (\ref{eq.hemite}).
	 Since the determinant of $A$ is
	 polynomial in the points $v_1,..,v_r$, $A$ is either 
	 singular for every set of points or it is regular
	 for almost all sets. Consequently we say that an interpolation problem is almost regular 
	 if it is uniquely solved for almost all sets of points.   
	 {\it The general dimensionality problem asks, for which $n,d,r,m_1,\ldots,m_r$, the corresponding problem is almost
	 regular on ${\cal P}_n^d$.} 
	 In particular, this is {\it not the case} for $n=d=r=2$ and $m_1,m_2=2$. Indeed, here the number of samples is $6$ 
	 which is the dimension of 
	 the space of quadratic polynomials. However, for any two points $v_1,v_2\in {\mathbb R}^2$ let $ax+by+c=0$ 
	 be the equation of the straight 
	 line through $v_1$ and $v_2$. Then $P(x,y)=(ax+by+c)^2$ is a nonzero polynomial of degree $2$, vanishing together 
	 with its first order 
	 derivatives both at $v_1$ and at $v_2$. See \cite{ciliberto2001geometric} for a stimulating discussion of this problem.
	 
	 The case of multivariate Birkhoff interpolation is a generalization of Hermite interpolation.  
	 We are now allowed to take directional derivatives and these need not be consecutive at each point.
	 This adds another complication to the regularity question of a specific instance of a  Birkhoff interpolation problem. In this case regularity may depend on both the points and the
	 directions.
	 
	 Many specific Hermite and Birkhoff interpolation schemes were shown to be almost regular
	 (\cite{alexander1997interpolation,ciliberto1998interpolation,eastwood1991interpolation,gasca1982lagrange,gevorgian1995bivariate,hakopian2000class,lorentz1992multivariate}).
	In our scheme regularity is achieved for {\it any choice of the points}, if the directions are generic (that is, for
	almost all sets of directions).
	 
	\smallskip
	 
	\noindent 2. The second question is to provide explicit estimates of the robustness of a certain polynomial interpolation (or reconstruction) 
	scheme. 
	This leads to ``Remez-type'' (or ``Norming'') inequalities (see \cite{brudnyi2013norming,coppersmith1992growth} and references therein). 
	Our second main result is a ``Birkhoff-Remez'' inequality for the interpolation scheme.

\section{The problem and its regularity}\label{Sec:Unisolv}
	\setcounter{equation}{0}
	 
	Let us give now an accurate setting of the problem. Denote by ${\cal P}_{n}^{d}$ the space of polynomials of 
	degree at most $d$ on $\mathbb{R}^n$ and by ${\cal L}_{n}^{d} \subset {\cal P}_{n}^{d}$ the space of 
	homogeneous polynomials 
	of degree $d$ on $\mathbb{R}^n$. The dimension of ${\cal P}_{n}^{d}$ is thus $\binom{d+n}{n}$, 
	that is, the number of distinct monomials in $n$-variables of degree at most $d$, 
	and the dimension of ${\cal L}_{n}^{d}$ is $\binom{d+n-1}{n-1}$. We define $N_{n,d}=\dim{({\cal L}_{n}^{d})}$. 
	Note that ${\cal P}_{n}^{d}$  is a direct sum of ${\cal L}_{n}^{0}, \ldots, {\cal L}_{n}^{d},$ hence 
	$\dim({\cal P}_{n}^{d})=\sum_{k=0}^d N_{n,k}.$
	 
	\smallskip
	 
	Let $P\in {\cal P}_{n}^{d}$. For a point $v$ and a direction $u$ in $\mathbb{R}^n$, we denote 
	by $D^k_uP(v)$ the $k$-th derivative $\left.\frac{d^k P(v+tu)}{dt^k}\right|_{t=0}$ of $P$ at $v$ along the straight line 
	in the direction $u$. With a slight abuse of this notation, we will also define $D^{0}_u P(v) = P(v)$.  
	\begin{problem}\label{prob.main}
		For each $k=0,1,\ldots,d$:
		let $Z_k=\{(v_{k,j},u_{k,j})\},\ j=1,\allowbreak \ldots,N_{n,k},$ be 
		a given set of pairs of a point $v_{k,j} \in \mathbb{R}^n$ and a direction vector $u_{k,j} \in \mathbb{R}^n$.
		For each $k=0,1,\ldots,d$: let $\Psi_k= \{\psi_{k,j}\}\subset \mathbb{R}, \ j=1,\ldots,N_{n,k}$, be a given set of
		real values.
		We seek a polynomial $P \in {\cal P}_{n}^{d}$ which satisfies
		\be\label{constraints}
			D^k_{u_{k,j}}P(v_{k,j})=\psi_{k,j}, \tab j=1,..,N_{n,k}, \tab k=0,..,d.
		\ee
	\end{problem} 
	
	Problem \ref{prob.main} is called regular on ${\cal P}_{n}^{d}$, given $Z_0,\ldots,Z_d$, if it has a unique 
	solution $P \in {\cal P}_{n}^{d}$, for all $\Psi_k= \{\psi_{k,j}\}\subset \mathbb{R}, \ j=1,\ldots,N_{n,k} , \
	k=0,\ldots,d.$
	
	Here is our first main result:
    \begin{theo}\label{thm.uniqueSolution}
    	For any sample points $v_{k,j}$, and for general directions $u_{k,j},$ Problem \ref{prob.main} is regular.
        Moreover, sets of directions that do not define a unique solution are exactly those for which
        there exists at least one $1 \le k \le d$ such that the directions
        $u_{k,1},..,u_{k,N_{n,k}}$ are roots of some nonzero, $n\mbox{-variate}$ homogeneous polynomial
        of degree $k$.
    \end{theo}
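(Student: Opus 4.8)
The plan is to exploit the homogeneous decomposition $P=\sum_{k=0}^{d}P_k$ with $P_k\in{\cal L}_{n}^{k}$, together with the elementary but decisive identity that, for a homogeneous $Q$ of degree $k$, the $k$-th directional derivative is independent of the base point:
$$D^k_u Q(v)=k!\,Q(u).$$
This holds because $Q(v+tu)$ is a polynomial of degree $k$ in $t$ whose leading coefficient is $Q(u)$ by homogeneity, so $\left.\frac{d^k}{dt^k}Q(v+tu)\right|_{t=0}=k!\,Q(u)$. First I would record the two companion facts: $D^k_u P_m\equiv 0$ whenever $m<k$ (the order of differentiation exceeds the degree), and for $m>k$ the quantity $D^k_u P_m(v)$ is a polynomial in $v$ of degree $m-k$. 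Consequently each level-$k$ constraint expands as
$$D^k_{u_{k,j}}P(v_{k,j})=k!\,P_k(u_{k,j})+\sum_{m>k}D^k_{u_{k,j}}P_m(v_{k,j}),$$
so it depends on the components $P_k,P_{k+1},\ldots,P_d$ but on none of lower degree.

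Next I would read off the matrix structure. Taking the coefficients of $P$ grouped by homogeneous degree as unknowns and the constraints grouped by order $k$, the displayed expansion shows that the coefficient matrix $A$ is block triangular in the grading by degree: ordering the blocks from degree $d$ down to $0$, the block coupling the level-$k$ constraints to the degree-$m$ unknowns vanishes unless $m\ge k$, so $\det A$ is the product of the diagonal blocks. The level-$k$ diagonal block is exactly $M_k=\left[k!\,u_{k,j}^{\alpha}\right]_{j,\,|\alpha|=k}$, the $N_{n,k}\times N_{n,k}$ evaluation matrix of a monomial basis of ${\cal L}_{n}^{k}$ at the directions $u_{k,1},\ldots,u_{k,N_{n,k}}$, scaled by $k!$. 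Since the off-diagonal blocks (the only place the base points $v_{k,j}$ enter, via the $m-k$ degree polynomials above) do not affect the determinant, we obtain
$$\det A=\prod_{k=0}^{d}\det M_k=\prod_{k=0}^{d}(k!)^{N_{n,k}}\det\left[u_{k,j}^{\alpha}\right],$$
a quantity independent of the sample points, which already yields the asserted independence of the $v_{k,j}$.

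It then remains to interpret $\det M_k$. The block $M_k$ is singular precisely when the evaluation functionals $Q\mapsto Q(u_{k,j})$ are linearly dependent on ${\cal L}_{n}^{k}$, i.e. precisely when some nonzero $Q\in{\cal L}_{n}^{k}$ vanishes at all of $u_{k,1},\ldots,u_{k,N_{n,k}}$. For $k=0$ we have $N_{n,0}=1$ and $M_0=[1]$, so no constraint arises. Combining, $A$ is nonsingular if and only if for every $1\le k\le d$ the directions $u_{k,j}$ are not common roots of a nonzero homogeneous polynomial of degree $k$, which is exactly the claimed characterization of the irregular locus. For the genericity statement I would note that each $\det M_k$ is a polynomial in the entries of the directions that is not identically zero (the degree-$k$ monomials are linearly independent functions, so suitable directions make the evaluation matrix invertible), whence $\{\det A=0\}$ is a proper algebraic subset and regularity holds for general directions.

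The main obstacle I anticipate is purely organizational: verifying cleanly that the diagonal blocks are exactly the evaluation matrices $M_k$, so that the isolated term $k!\,P_k(u_{k,j})$ really sits on the diagonal and every remaining contribution sits strictly below it. Once this block-triangular form is established, the determinant factorization and the translation of each factor into the stated vanishing condition on the directions are routine linear algebra; the converse direction --- that a common root yields a genuine nonzero interpolant with vanishing data --- is then immediate, since a singular diagonal block forces $\det A=0$ and hence a nonzero element of $\ker A$.
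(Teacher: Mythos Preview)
Your proof is correct and rests on the same key identity $D^k_u P_k(v)=k!\,P_k(u)$ that the paper isolates as Lemma~\ref{lem:deriv.homog}; the paper packages the resulting triangular structure as an explicit back-substitution (solving for $P_d$, then $P_{d-1}$, etc.) together with a separate construction of a kernel element for the converse, whereas you compress both directions into the single observation that the system matrix is block triangular with diagonal blocks $M_k$. This is the same argument in determinant form rather than algorithmic form, so the approaches coincide in substance.
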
 
    Note that Theorem \ref{thm.uniqueSolution} is independent of the configuration of the points.
    
	Proving Theorem \ref{thm.uniqueSolution}, we now consider the following intermediate problem.
	\begin{problem}\label{prob.hom}
		Let the set $\tilde Z= \{s_j\}\subset \mathbb{R}^n, \ j=1,\ldots,N_{n,k},$ and 
		$\Psi= \{\psi_{j}\}\subset \mathbb{R}, \ j=1,\ldots,N_{n,k}$, $k \in \mathbb{N}$, be given.
		We seek for $P \in {\cal L}_{n}^{k}$ that satisfies
		\be\label{constraints.11}
			P(s_{j})=\psi_{j}, \tab j=1,\ldots,N_{n,k}.
		\ee
	\end{problem}
	
	Problem \ref{prob.hom} is called regular on ${\cal L}_{n}^{k}$ given $\tilde Z$, if it has a unique solution 
	$P \in {\cal L}_{n}^{k}$ for each $\Psi$.
	 
	\smallskip
	 
	For $\tilde Z= \{s_j\}\subset \mathbb{R}^n, \ j=1,\ldots,N_{n,k},$ consider the following multidimensional homogeneous
	Vandermonde matrix $A_{n,k}(\tilde Z)$, $[A_{n,k}(\tilde Z)]_{i,j}=s_i^{\alpha_j}, \ i,j=1,\ldots, N_{n,k}$, 
	where the multi-indices $\alpha, |\alpha|=k$, are ordered lexicographically. 
	$A_{n,k}(\tilde Z)$ is the matrix associated with Problem \ref{prob.hom},
	written in the monomial basis.
	Consequently Problem \ref{prob.hom} is regular given $\tilde Z$ if and only if the determinant of $A_{n,k}(\tilde Z)$
	is nonzero.
	Since this determinant is homogeneous in the coordinates of each of the vectors $s_i$, this property depends only on the 
	directions of the vectors $s_i$, but not on their length.
	 
	\smallskip
	 
	Let us return to Problem \ref{prob.main}. For a set $Z_k=\{(v_{k,j},u_{k,j})\}, j=1,\ldots,N_{n,k}$, of pairs of
	a point $v_{k,j}$ and a direction $u_{k,j}$ in $\mathbb{R}^n$, denote by 
	$\tilde Z_k=(u_{k,1},\allowbreak \ldots,u_{k,N_{n,d}}) \subset {\mathbb R}^n$, 
	the set of the corresponding directions $u_{k,j}$.
	\begin{proposition}\label{prop.main1}
		Problem \ref{prob.main} is regular on ${\cal P}_{n}^{d}$ given $Z_0,\ldots,Z_d$, if and only if, 
		Problem \ref{prob.hom} is regular on ${\cal L}_{n}^{k}$ given $\tilde Z_k$, for each $k=1,\ldots,d$.
		Equivalently, the determinant of $A_{n,k}(\tilde Z_k)$ is nonzero for each $k=1,\ldots,d$. 
		In particular, the regularity of Problem \ref{prob.main} is determined only by the directions 
		of the vectors $u_{k,j}$, and is invariant with respect to their length, and with respect to the position of the 
		points $v_{k,j}$.
	\end{proposition}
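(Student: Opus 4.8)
The plan is to write the coefficient matrix $A$ of the linear system (\ref{constraints}) in the monomial basis of $\Pnd$, graded by total degree, and to show that in this basis $A$ is block triangular with diagonal blocks equal to the homogeneous Vandermonde matrices $A_{n,k}(\tilde Z_k)$ up to scalar factors. Once this structure is in place, the proposition follows by factoring $\det A$ over the diagonal blocks. The decisive structural fact driving everything is that the directional derivative $D^k_u$ lowers the degree of a homogeneous polynomial by exactly $k$, and that on the piece of degree exactly $k$ it returns a \emph{constant} that does not depend on the base point $v$.

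The first step is to record the action of $D^k_u$ on monomials. By the chain rule $D^k_u = (u\cdot\nabla)^k = \sum_{|\b|=k}\binom{k}{\b}u^\b D^\b$, so for a monomial $x^\a$ with $|\a|=m$ the quantity $D^k_u x^\a(v)$ is a homogeneous polynomial of degree $m-k$ in $v$ when $m\ge k$, and vanishes identically when $m<k$. The borderline case $m=k$ is the crucial one: since $D^\b x^\a$ with $|\b|=|\a|=k$ and $\b\le\a$ forces $\b=\a$, one gets $D^k_u x^\a = \binom{k}{\a}\a!\,u^\a = k!\,u^\a$, a constant independent of $v$.

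Next I would group the monomial basis of $\Pnd$ into blocks indexed by total degree $m=0,1,\ldots,d$ (the block for $m$ spanning ${\cal L}_n^m$), and group the rows of $A$ by the sampling order $k=0,1,\ldots,d$. By the degree bookkeeping of the previous step, the $(k,m)$ block of $A$ vanishes whenever $m<k$, so $A$ is block triangular with respect to this grading. Its $k$-th diagonal block sends the coefficient vector of $P_k\in{\cal L}_n^k$ to $\bigl(D^k_{u_{k,j}}P_k(v_{k,j})\bigr)_j=\bigl(k!\,P_k(u_{k,j})\bigr)_j$; written out, its $(j,\a)$ entry is $k!\,u_{k,j}^\a$, so this block equals $k!\,A_{n,k}(\tilde Z_k)$. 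In particular the diagonal blocks depend only on the directions $u_{k,j}$ and not on the points $v_{k,j}$, which is exactly the source of the point-independence asserted in the proposition.

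Finally, the determinant of a block-triangular matrix is the product of the determinants of its diagonal blocks, so
\be
  \det A = \prod_{k=0}^{d}(k!)^{N_{n,k}}\,\det A_{n,k}(\tilde Z_k).
\ee
Since $A_{n,0}$ is the $1\times1$ identity, this product is nonzero if and only if $\det A_{n,k}(\tilde Z_k)\neq 0$ for every $k=1,\ldots,d$. Combined with the equivalence already noted in the text between regularity of Problem \ref{prob.hom} given $\tilde Z_k$ and the nonvanishing of $\det A_{n,k}(\tilde Z_k)$, this yields the claim. I expect no genuine analytic obstacle here: the only real work is the multi-index bookkeeping in the monomial computation of the first step, and the delicate point is simply the borderline case $m=k$, which is what makes the diagonal blocks both point-independent and equal to the homogeneous Vandermonde matrices; the triangular structure and the determinant factorization are then immediate.
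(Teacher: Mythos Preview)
Your proof is correct and takes a genuinely different route from the paper's. Both arguments rest on the same elementary computation---your ``borderline case $m=k$'' is exactly the paper's Lemma~\ref{lem:deriv.homog}, namely $D^k_u P_k(v)=k!\,P_k(u)$---but they exploit it differently. You assemble the full coefficient matrix $A$ of the linear system~(\ref{constraints}), observe that in the degree-graded monomial basis it is block upper triangular with diagonal blocks $k!\,A_{n,k}(\tilde Z_k)$, and read off the equivalence from the factorization $\det A=\prod_{k=0}^d (k!)^{N_{n,k}}\det A_{n,k}(\tilde Z_k)$. The paper instead argues algorithmically: assuming each homogeneous problem is regular, it solves~(\ref{constraints}) by back-substitution, recovering $P_d$ from the top-order equations, then $P_{d-1}$ from the corrected next block, and so on; for the converse it takes the smallest irregular level $l$, picks a nonzero $P_l\in{\cal L}_n^l$ annihilated by the $l$-th block, and uses the already-established regularity below level $l$ to correct $P_l$ into a nonzero kernel element of the full system. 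Your linear-algebraic argument is shorter and yields the explicit determinant formula as a bonus; the paper's iterative argument is constructive (it is effectively the solution algorithm) and its degree-by-degree decomposition is reused in Section~\ref{Sec:Remez} to prove the Birkhoff--Remez inequality.
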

	
	\begin{proof}
		For each polynomial 
		$P=\sum_{|\alpha|\le d}a_\alpha x^\alpha \in {\cal P}_{n}^{d},$ 
		and for each $k=0,1,\ldots,d$, denote by $P_k=\sum_{|\alpha|=k}a_\alpha x^\alpha$ the $k$-th homogeneous 
		component of $P$, 
		and put $\tilde P_k=\sum_{l=k}^d P_l$. Let us recall that for a point $v$ and a direction $u$ in $\mathbb{R}^n$, 
		we denote by $D^k_uP(v)$ the $k$-th derivative $\left.\frac{d^k P(v+tu)}{dt^k}\right|_{t=0}$ of $P$ at $v$ along 
		the straight line in the direction $u$.
		 
		\bl\label{lem:deriv.homog}
		For $P,P_k,\tilde P_k$ as above, for each $v,u\in {\mathbb R}^n,$ and for each $k=1,\ldots,d,$ we have
		 
		\be\label{eq:deriv.homog}
		D^k_uP_l(v)=0, \ l<k, \ \ D^k_uP(v)=D^k_u\tilde P_k(v), \ \  D^k_uP_k(v)=k!P_k(u).
		\ee
		\el
		
		\begin{proof}
			The first equality is immediate, and the second follows directly from the first. The third one is Euler's 
			identity for homogeneous polynomials. It follows directly from the fact that for $P(x)$ being the 
			monomial $x^\alpha,       \ |\alpha|=k,$ the highest $k$-th degree term in 
			$P(v+tu)$ is $u^\alpha\cdot t^k.$
		\end{proof}
		 
		\medskip
		 
		Assume now that Problem \ref{prob.hom} is regular on ${\cal L}_{n}^{k}$ given $\tilde Z_k$, for each
		$k=1,\ldots,d$.
		Consider the part of the interpolation equations (\ref{constraints}) for Problem \ref{prob.main} with the highest 
		order derivatives:
		 
		\begin{equation}\label{eq.hom}
			D^d_{u_{d,j}}P(v_{d,j})=\psi_{d,j}, \tab j=1,..,N_{n,d}.
		\end{equation}
		By Lemma \ref{lem:deriv.homog} these equations are reduced to $P_d(u_{d,j})=\frac{1}{d!}\psi_{d,j}, \ j=1,..,N_{n,d}$. 
		This is an instance of Problem \ref{prob.hom}, and by our assumption this system is regular. Hence, the highest
		homogeneous component $P_d$ of a solution $P$, is uniquely determined by (\ref{eq.hom}). Next we consider $\hat P_d=P-P_d$. 
		This is a polynomial of degree $d-1$, and it satisfies the corrected system of equations (\ref{constraints}), 
		which for the derivatives of order $d-1$ takes the form:
		$$
			D^{d-1}_{u_{d-1,j}}\hat{P}_d(v_{d-1,j})=\psi_{d-1,j}-D^{d-1}_{u_{d-1,j}}P_d(v_{d-1,j}),\tab j=1,..,N_{n,d-1}.
		$$
		As above, from this system we find the unique homogeneous component $P_{d-1}$ of $P$. 
		Continuing in this way till the degree one and then recovering the constant term of $P$ by setting $P_{0}(v_{0,1}) =  \psi_{0,1}-\sum_{k=1}^d P_{k}(v_{0,1})$,
		we have uniquely reconstructed $P$.
		 
		\medskip
		 
		In the opposite direction, assume that given Problem \ref{prob.main} with the sets $Z_1,\ldots,Z_d$, some of the
		associated Problems \ref{prob.hom} are not regular. Fix the smallest index $l\le d$ for which this happens. Then,
		we can find a nonzero homogeneous polynomial $P_l$ such that
		$$
			D^l_{u_{l,j}}P_l(v_{l,j})=0, \tab j=1,..,N_{n,l}.
		$$
		Now we construct the right hand side in Problem \ref{prob.main} for which it has a nonzero solutions. 
		Start with the solution $P=P_l$ and put
		 
		$$
			\psi_{k,j}:= D^k_{u_{k,j}}P_l(v_{k,j}), \tab j=1,..,N_{n,k}, \tab k=0,..,d.
		$$
		By the construction, $\psi_{k,j}=0$ for  $k \ge l$. However, $\psi_{k,j}$ may be 
		nonzero for $k=0,1,\ldots,l-1$. We consider now Problem \ref{prob.main} on the sets $Z_0,Z_1,\ldots,Z_{l-1}$ 
		for ${\cal P}_{n}^{l-1},$ that is, for polynomials of degree  $l-1$. Since by construction Problem \ref{prob.hom} is 
		regular on each of the sets $\tilde Z_0, \tilde Z_1,\ldots, \tilde Z_{l-1},$ we conclude, by the already 
		proved part of Proposition \ref{prop.main1}, that Problem \ref{prob.main} is regular for  
		${\cal P}_{n}^{l-1}$ on $Z_0, Z_1,\ldots, Z_{l-1}.$ 
		So we can find (uniquely) a polynomial $P'$ of degree $l-1$, such that
		 
		$$
			D^k_{u_{kj}}P'(v_{k,j})=\psi_{k,j}, \tab j=1,..,N_{n,k}, \tab k=0,..,l-1.
		$$
		Therefore, $P=P_l-P'$ is a nonzero polynomial such that
		 
		$$
			D^k_{u_{kj}}P(v_{k,j})=0, \tab j=1,..,N_{n,k}, \tab k=0,..,d.
		$$
		We conclude that Problem \ref{prob.main} on $Z_0, Z_1,\ldots, Z_d$ is not regular. This completes the proof of
		Proposition \ref{prop.main1}. 
	\end{proof}
	
	We now prove Theorem \ref{thm.uniqueSolution}. 
	\begin{proof}
		By Proposition \ref{prop.main1}, the regularity of Problem \ref{prob.main} on $\Pnd$ given the sets
		$Z_0, Z_1,\ldots, Z_d$, is equivalent to the regularity of Problem \ref{prob.hom} on ${\cal L}_{n}^k$ given
		the set $\tilde{Z}_k = (u_{k,1},\ldots,u_{k,N_{n,k}})$, for each $k=1,\ldots, d$. 
		In turn, for each $k=1,\ldots,d$, Problem \ref{prob.hom} on ${\cal L}_{n}^k$ given 
		$\tilde{Z}_k = (u_{k,1},\ldots,u_{k,N_{n,k}})$, is the standard 
		Lagrange interpolation on ${\cal L}_{n}^k$ which is regular exactly when
		$(u_{k,1},\ldots,u_{k,N_{n,k}}) \subset \R^n$ are not the roots of some $P \in {\cal L}_{n}^k$. 
	\end{proof}		 
	\bt\label{thm.2d}
		In the planar case, for $n=2$, Problem \ref{prob.main} is regular if and only if for each $k=1,\ldots,d$, the
		directions of the vectors $u_{k,j}$ are pairwise linearly independent.
	\et
	
	\begin{proof}
	In the planar case, for each $k$, it is known that (but might not be easy to locate reference to, see for example
	\cite{yaacov2014multivariate}) the homogeneous Vandermonde determinants of the matrix $A_{2,k}(\tilde Z_k)$ take
	the following convenient form:
	$$det \left( A_{2,k}(\tilde Z_k) \right)= \prod_{1 \le i < j \le k+1}det[u_{k,i},u_{k,j}],$$ 
    where $det[u_{k,i},u_{k,j}]$ denotes the determinant of the two by two matrix having $u_{k,i}$ and $u_{k,j}$
    as its rows. It follows that $det \left( A_{2,k}(\tilde Z_k) \right)$ is nonzero is equivalent to the directions
    of $k^{th}$ Homogeneous Problem being pairwise independent. Finally, by Proposition \ref{prop.main1}, the regularity
    of the homogeneous problems is equivalent to the regularity of Problem  \ref{prob.main}.
    \end{proof}

\section{Birkhoff-Remez inequality for Problem \ref{prob.main}}\label{Sec:Remez}
	\setcounter{equation}{0}
	 
	\subsection{Norming inequalities}\label{Sec:Remez1}
		 	
		The classical Remez inequality and its generalizations compare maxima of a polynomial $P$ on two sets 
		$U\subset G$ (see \cite{brudnyui1973extremal,brudnyi2013norming,remez1936propriete,yomdin2011remez} and references therein). We would like to extend this 
		setting, in order to include into sampling information on $U$ the derivatives of $P$. 
		Such an extension is provided by a wider (and also classical) setting of ``norming sets'' 
		and ``norming inequalities''. Let $G\subset {\mathbb R}^n$ be a compact domain, and 
		let $L\subset {\cal P}_n^d$ be a normed linear subspace of the space of polynomials of degree at most $d$, 
		equipped with the norm $||P||_G=\max_{G}|P|$. Let $L^*$ denote the dual space of all the linear functionals on $L$.
		 	
		\bd\label{definite.lin}
		Let $U \subset L^*$ be a bounded set of linear functionals on $L$. A semi-norm $||P||_U$ on $L$ is defined as
		\be\label{lin.funct.norm}
		||P||_U = \sup_{w\in U} |w(P)|.
		\ee
		The set $U$ is said to be $L$-norming if the semi-norm $||P||_U$ is in fact a norm on $L$, that is, if for $P\in L$ 
		we have that if $||P||_U=0$ then $P=0$. Equivalently, $U$ is $L$-norming if
		 	
		\be\label{FRemez.lin}
		N_{L}(G,U):= \sup_{P\in L, P\ne 0}\frac{||P||_G}{||P||_U}< \infty.
		\ee
		$N_{L}(G,U)$ is called the $L$-norming constant of $U$ on $G$.
		\ed
		The usual Remez-type inequalities are included in the new setting by identifying $x\in U\subset G$ with the 
		linear functional $\delta_x$ sampling a polynomial at the point  $x$.
		 	
		\smallskip
		 	
	\subsection{Robust polynomial reconstruction}\label{Sec:Remez2}
		 	
		To illustrate the role of the norming constant in estimating the robustness of polynomial reconstruction, 
		let us consider the following reconstruction scheme: our goal is to find the ``best'' polynomial approximation 
		of a given function $f$ on $G$ according to the norm $||\cdot||_G$. In other words, the ``ideal approximation'' 
		is a polynomial $\bar P= \arg \min _{P\in {\cal P}_n^d}||f-P||_G$ (where $\arg \min_{P\in {\cal P}_n^d}$ 
		is the operator extracting the minimizing polynomial).
		Let us denote the error
		$||f-\bar P||_G$ of this ideal approximation by $E$. Now, let us assume that our input consists of noisy measurements of $f$ 
		on a subset $U\subset G$ (this setting can be easily extended to the 
		measurements being more general linear functionals). 
		So we start with a function $\tilde f=f+\nu$ on $U$, where $\nu$ is the measurement error function, satisfying 
		$\max_U |\nu(x)|\leq h$. 
		As an output we take $\hat P= \arg \min _{P\in {\cal P}_n^d}||\tilde f-P||_U$. The following result 
		shows that the output error $||\hat P-\bar P||_G$ can be bounded in terms of the norming 
		constant $N=N_{{\cal P}_n^d}(G,U)$, $E$ and $h$.
		 	
		\bp\label{prop:robust}
		 	
		$$
			||\hat P-\bar P||_G \leq 2N(E+h).
		$$
		\ep
		
		\begin{proof}
		By the construction of $\bar P$ we have that
		$||\tilde f-\bar P||_U\le ||f-\bar P||_U + ||\nu||_U \le E+h.$ Since 
		$\hat P= \arg \min _{P\in {\cal P}_n^d}||\tilde f-P||_U$, then
		$||\tilde f-\hat P||_U \le ||\tilde f-\bar P||_U \le  E+h$.
		We conclude that $||\bar P-\hat P||_U \le 2(E+h),$ 
		and hence $||\bar P-\hat P||_G \le 2N(E+h).$
		\end{proof}
		 	
	\subsection{Norming inequality for extended Problem \ref{prob.main}}\label{Sec:Remez3}
		 	
		Let us recall (and extend) some notations introduced in the proof of Theorem \ref{thm.uniqueSolution}. 
		For each polynomial $P=\sum_{|\alpha|\le d}a_\alpha x^\alpha \in {\cal P}_{n}^{d},$ 
		and for each $k=0,1,\ldots,d$, we have denoted by $P_k=\sum_{|\alpha|=k}a_\alpha x^\alpha$ 
		the $k$-th homogeneous component of $P$. We will denote by ${\cal P}^{d,k}_n\subset {\cal P}_n^d$  
		the subspace of all the polynomials $P$ of the form $P=\sum_{k \le |\alpha|\le d}a_\alpha x^\alpha$. 
		Recall that for $P\in {\cal P}_{n}^{d}$ we have denoted $\tilde P_k=\sum_{l=k}^d P_l\in {\cal P}^{d,k}_n$.
		 	
		\smallskip
		 	
		Returning to Problem \ref{prob.main}, we now extend its setting, allowing larger sets $Z_k$. 
		So now $Z_k\subset {\mathbb R}^n\times {\mathbb R}^n$ may be any bounded set of couples $(v,u)$ 
		of a point and a direction vector.
		 	
		For any linear subspace $L\subset {\cal P}_n^d$ the sets $Z_k$ can be considered as 
		subsets $\bar Z_k\subset L^*$, if we identify the couple $(v,u)\in Z_k$ with the linear 
		functional $D^k_{u,v}$ on $L$ defined by $D^k_{u,v}(P)=D^k_{u}P(v).$ For the sampling sets
		$Z_0,\ldots,Z_d$, we define $U=U(Z_0,\ldots,Z_d)=\cup_{k=0}^d \bar Z_k \subset L^*$,
		and $U_k=\cup_{l=k}^d \bar Z_l \subset L^*$.
		 	
		On the other hand, extending the notations used in Theorem \ref{thm.uniqueSolution}, 
		we denote by $\tilde Z_k=\{u: \exists v, \ (v,u)\in Z_k\} \subset {\mathbb R}^n $ 
		the set of the directions $u$ that appear in $Z_k$. As above, 
		for any linear subspace $L\subset {\cal P}_n^d$ the set $\tilde Z_k$ 
		can be considered as a subset $\tilde Z_k\subset L^*$, via identifying $u\in \tilde Z_k$ 
		with the evaluation functional at the point $u$, $\delta_u \in L^*$.
		 	
		\smallskip
		 	
		To simplify the presentation we fix $G$ to be equal to the unit ball $B=B^n_1\subset {\mathbb R}^n$, 
		and assume that for each $k=0,\ldots,d$, the sets $Z_k$ satisfy $Z_k \subset B\times B$, that is, 
		both the sampling points $v$ and the directions $u$ belong to the unit ball $B$.
		 	
		\bt\label{thm:main2}
		For each $k=0,\ldots,d$, set $L_k={\cal L}_{n}^k\subset {\cal P}_n^d$ to be the subspace of 
		homogeneous polynomials of degree $k$. Let each of the directions sets $\tilde Z_k\subset L_k^*$ 
		be $L_k$-norming on $B$, with norming constant $\theta_k=N_{L_k}(B,\tilde Z_k)$. 
		Then $U=U(Z_0,\ldots,Z_d)$ is norming for ${\cal P}_n^d$, with the 
		norming constant $N_{{\cal P}_n^d}(B,U)$ satisfying
		 	
		$$
			N_{{\cal P}_n^d}(B,U)\leq \sum_{l=0}^d \frac{\theta_l}{l!} \cdot \prod_{j=0}^{l-1} (1+ m_j\cdot \frac{\theta_j}{j!}),
		$$
		where $m_l=m(d,l)=T_d^{(l)}(1)$. Here $T_d(x)$ is the $d$-th Chebyshev polynomial, 
		and $T_d^{(k)}(x)$ is its $k$-th derivative.
		\et
		
		\begin{proof}
		Let us denote by $\kappa_k$ the norming constant $N_{L_k}(B,\bar Z_k)$. Then for each $k=0,\ldots,d$, we have
		 	
		\be\label{eq:factorial}
			\kappa_k=\frac{\theta_k}{k!}
		\ee
		Indeed, by Lemma \ref{lem:deriv.homog}, the sets of linear functionals on $L_k$, $\tilde Z_k$ 
		and $\bar Z_k$ satisfy $\bar Z_k=k!\tilde Z_k$.
		 	
		\smallskip
		 	
		Next we prove by induction (starting from $k=d$ and going down) the following result:
		 	
		\bl\label{lem:induction}
		For each $k=0,\ldots,d$, the set $U_k$ is norming for ${\cal P}^{d,k}_n$ on $B$. 
		The norming constant $\eta_k=N_{{\cal P}^{d,k}_n}(B,U_k)$ satisfies
		
		\begin{align}\label{eq:indiction}
			\begin{split}
				\eta_d &= \kappa_d,\\ 
				\eta_k &\le \kappa_k+(1+ m_k\cdot \kappa_k)\eta_{k+1}, \tab k<d.
			\end{split}
		\end{align}
		\el
		
		\begin{proof}
		For $k=d$ our problem is reduced to Problem \ref{prob.hom} on the space $L_d={\cal L}_{n}^d\subset {\cal P}_n^d$ 
		of homogeneous polynomials of degree $d$. By assumptions, and by (\ref{eq:factorial}), 
		we have $N_{L_d}(B,U_d)=N_{L_d}(B,\bar Z_d)=\kappa_d=\frac{\theta_d}{d!}>0$. 
		Consequently Lemma \ref{lem:induction} holds for the case $k=d$. 
		Assume that (\ref{eq:indiction}) is satisfied for $k=l+1\leq d$ and prove it for $k=l$. Let $P\in {\cal P}^{d,l}_n$ 
		satisfy $|w(P)|\le 1$ for each $w\in U_l$. In particular, $|w(P)|\le 1$ for all $w\in U_{l+1}$. 
		Since for $w\in U_{l+1}$, $w(P)=w(\tilde P_{l+1})$, we have that for $w\in U_{l+1}$, 
		$|w(\tilde P_{l+1})|\le 1$.  By the induction assumption, 
		and by definition of the norming constant we have that
		 	
		\be\label{eq:norm.tilde}
		||\tilde P_{l+1}||_B\leq \eta_{l+1}.
		\ee
		 	
		Now, for the homogeneous component $P_l$ of $P$ we have $P_l=P-\tilde P_{l+1}$, 
		and thus for $w\in \bar Z_l$, $|w(P_l)|=|w(P-\tilde P_{l+1})|\leq 1+ |w(\tilde P_{l+1})|$. 
		To estimate the values $w(\tilde P_{l+1})$, which are the directional derivatives of order $l$ 
		of $\tilde P_{l+1}$, we apply the classical Markov inequality, in the form presented in \cite{harris2008multivariate,skalyga2005bounds}:
		 	
		\bt\label{thm:Markov}
		For $P \in {\cal P}_{n}^d$, and for any direction vector $u\in {\mathbb R}^n, \ ||u||\leq 1$,
		 					
		$$
		||D^k_{u}P||_B \le m_k||P||_B,
		$$
		where $m_k=T_d^{(k)}(1)$.
		\et
		
		Applying this result to $\tilde P_{l+1}$ we conclude, using (\ref{eq:norm.tilde}) 
		that for $w\in \bar{Z}_l$ the bound $|w(\tilde P_{l+1})|\leq m_l\cdot\eta_{l+1}$ holds. 
		Therefore, for $w\in \bar Z_l$ we get $|w(P_l)|\leq 1+ m_l\cdot \eta_{l+1}$.
		 	
		By the assumptions of the theorem the set $\tilde Z_l$ is norming for $L_l$, 
		with the norming constant $N_{L_l}(B,\tilde Z_k)=\theta_l.$ Therefore, by (\ref{eq:factorial}), 
		the set $\bar Z_l$ is also norming for $L_l$, with the norming constant 
		$\kappa_l=N_{L_l}(B,\bar Z_k)=\frac{\theta_l}{k!}.$  
		We conclude that		
		$$||P_l||_B\le \kappa_l(1+ m_l\cdot \eta_{l+1}).$$ 
		Finally, since $\tilde P_l=P_l+\tilde P_{l+1},$ we obtain
		 	
		$$
		||P||_B\leq ||\tilde P_{l+1}||_B + ||P_l||_B \leq \eta_{l+1}+ \kappa_l(1+ m_l\cdot \eta_{l+1}).
		$$
		This inequality is true for each polynomial $P\in {\cal P}^{d,l}_n$, and hence
		 	
		$$
		\eta_l\le \eta_{l+1}+ \kappa_l(1+ m_l\cdot \eta_{l+1})=\kappa_l +  (1+ m_l\cdot \kappa_l)\eta_{l+1}.
		$$
		This completes the proof of Lemma \ref{lem:induction}. 
		\end{proof}

		To complete the proof of Theorem \ref{thm:main2} it remains to solve explicitly the recurrence inequality 
		(\ref{eq:indiction}).
		 	
		\bl\label{lem:recur}
		Let $\tau_k, \ k=0,\ldots, d,$ satisfy recurrence relation 	
		\begin{align}\label{eq:recur}
			\begin{split}
				\tau_d &= \kappa_d, \\
				\tau_k &= \kappa_k +  (1+ m_k\cdot \kappa_k)\tau_{k+1}, \tab k<d.
			\end{split}
		\end{align}
		Then for each $k=0,\ldots,d$, we have
		 	
		\be\label{eq:recur1}
		\eta_k \le \tau_k = \sum_{l=k}^d \kappa_l \cdot \prod_{j=k}^{l-1} (1+ m_j\cdot \kappa_j).
		\ee
		Here the empty product (for $l=k$) is assumed to be equal to one.
		\el
		
		\begin{proof}
		First we prove by induction the expression for $\tau_k$. For $k=d$ we have $\tau_d=\kappa_d$, 
		which is the right hand side of (\ref{eq:recur1}). Now, for $k<d$, using induction assumption, we have
		 	
		$$
		\tau_k = \kappa_k +  (1+ m_k\cdot \kappa_k)\tau_{k+1} = \kappa_k +(1+ m_k\cdot \kappa_k)\sum_{l=k+1}^d \kappa_l 
		\cdot \prod_{j=k+1}^{l-1} (1+ m_j\cdot \kappa_j)
		$$
		$$
		=\kappa_k +\sum_{l=k+1}^d \kappa_l \cdot \prod_{j=k}^{l-1} (1+ m_j\cdot \kappa_j)=\sum_{l=k}^d \kappa_l \cdot 
		\prod_{j=k}^{l-1} (1+ m_j\cdot \kappa_j).
		$$
		This completes the proof of Lemma \ref{lem:recur}. 
		\end{proof} 	
		 	
		To complete the proof of Theorem \ref{thm:main2} it remains to substitute into (\ref{eq:recur1}) 
		the values $\kappa_k=\frac{\theta_k}{k!}$.
		\end{proof}
		
		In the proof of theorem \ref{thm:main2} we applied Markov inequality (\ref{thm:Markov}), to upper bound 
		the $l^{th}-1$ derivatives of polynomials $P\in {\cal P}^{d,l}_n$, $l=1,\ldots,d$. These are incomplete polynomials
		and one may expect sharper Markov inequalities. Indeed approximation with incomplete polynomials and, in
		general, Markov-type inequalities for constrained polynomials are a subject of research in both approximation theory and
		general analysis (for the univariate case see, for example,
		\cite{erdelyi1998markov,cheney1980approximation,baishanski1984incomplete}).
		For the multivariate case, we are unaware of a general result which improves the upper bound in (\ref{thm:Markov}). We
		suggest here that Theorem \ref{thm:main2} can be improved using such a result.
		 
		\subsubsection{Birkhoff-Remez inequality}
			 		
			The classical Remez inequality \cite{remez1936propriete} bounds the maximum of a univariate polynomial $P$ 
			on the interval $[-1,1]$ through its maximum on a subset $Z\subset [-1,1]$ of a positive measure. 
			This theorem was extended to several variables in \cite{brudnyui1973extremal}, and then further generalized in 
			\cite{yomdin2011remez}, 
			where the Lebesgue measure $\mu_n(Z)$ was replaced by a certain quantity $\o_{n,d}(Z),$ 
			expressed through the metric entropy of $Z$:
			 		
			\bt\label{thm:remez}(\cite{yomdin2011remez}, Theorem 2.3.)
			If $\omega_{n,d}(Z)=\omega > 0$, then for each polynomial $P\in {\cal P}^d_n$
			 		
			$$	
				\sup_{x \in B}|P(x)| \le T_d \left(\frac{1+(1-\omega)^{\frac{1}{n}}}{1-(1-\omega)^{\frac{1}{n}}} \right) 
				\sup_{x\in Z}|P(x)|.
			$$	
			\et	
						
			For any measurable $Z$ we have $\o_{n,d}(Z)\geq \mu_n(Z),$ and $\o_{n,d}(Z)$ may be positive for discrete 
			and finite $Z$. We will not give here an accurate definition of $\o_{n,d}(Z),$ referring the reader to 
			\cite{yomdin2011remez}. 
			Replacing in Theorem \ref{thm:remez} $\o_{d,n}(Z)$ with a smaller value $\mu_n(Z)$ we obtain the result 
			of \cite{brudnyui1973extremal}, 
			and putting $n=1$ we get back the classical Remez inequality.
			 		
			\smallskip
			 		
			Theorem \ref{thm:main2} reduces estimation of the norming constant in our setting of Birkhoff 
			interpolation to 
			the norming constants $\theta_k$ of the direction sets $\tilde Z_k\subset {\mathbb R}^n$, 
			in the spaces of homogeneous polynomials of degree $k$. In this situation the Remez-type inequality 
			of Theorem \ref{thm:main2} is applicable. We obtain the following bound:
			 		
			\bc\label{cor:main3}
			Assume that for each $k=0,\ldots,d$ we have $\o_k=\o_{n,k}(\tilde Z_k)>0$. 
			Then we can put in Theorem \ref{thm:main2} the values 
			$\theta_k=T_d \left(\frac{1+(1-\omega_k)^{\frac{1}{n}}}{1-(1-\omega_k)^{\frac{1}{n}}} \right).$
			\ec
			
		\subsection{The univariate case}
			We now shortly discuss aspects of the reconstruction for the most basic case of $n=1$ and
			the domain of approximation being $G=[0,1] \subseteq \R$. 
			In this case, and if the number of measurements is equal to $d+1$, 
			the approximation scheme is reduced to an instance of a single-variate (classic) Birkhoff interpolation. 
			For each $k=0,\ldots,d$, we have a single measurement of the $k^{th}$ derivative of $P$ at the point 
			$v_k$ (where the points are not necessarily distinct or ordered). As before, $U=\{w_1,\ldots,w_{d+1}\}$ is the set of
			the corresponding linear functionals, $w_k(P) = \frac{d^{k-1}}{x^{k-1}}P(v_{k})$. 
			
			If the points all coincide at a point $v$, then the interpolant polynomial is given by the Taylor approximation 
			at $v$. In this case, a direct calculation shows that the norming constant $N=N_{{\cal P}_1^d}([0,1],U)$, 
			is bounded by a constant not depending on $d$. Our preliminary calculation and numerical experiments indicate 
			that $N$ remains bounded for equidistant points in $[0,1]$. 
			
			
			Another natural problem which can be considered is the accuracy of the approximation when the function to be approximated, $f$,  
			is smooth to some degree. In this setting, for a set of points $v_1,\ldots,v_{d+1}$, and for $f \in C^d[0,1]$ 
			our measurements are $\frac{d^{k-1}}{x^{k-1}}f(v_{k})$, $k=0,\ldots,d$.
			In general, the reconstruction error of the scheme will depend on the position of the points and 
			can be studied using Birkhoff reminder Theorem(see \cite{birkhoff1906general}).
			Specifically, assume we start with a sequence of points $v_1,\ldots,v_{d+1}$ and then randomly permute them 
			to get a new sequence $\tilde{v_1},\ldots,\tilde{v}_{d+1}$ made out from the same set of points. 
			Interpolating f with the permuted sequence of measurments 
			may have a significant effect on accuracy of the reconstruction comparing to 
			the non permuted sequence.
			
			We consider it an interesting and challenging problem attaining bounds for certain configurations of the points for 
			the aforementioned problem. We would like to thank the referee for these observations and for suggesting this direction.
			
%

			\vskip1cm

			    \bibliographystyle{plain} 
    			\bibliography{bib}{}

\end{document}